\definecolor{mno}{rgb}{0.5,0.1,0.5}
\newcommand{\R}{\mathds R}
\newcommand{\rd}{{\mathds R^d}}
\newcommand{\Pp}{\mathds P}
\newcommand{\Ee}{\mathds E}
\newcommand{\C}{\mathds C}
\newcommand{\I}{\mathds 1}
\newcommand{\Dh}{{\mathrm{dim}}_{\mathrm H}}
\newcommand{\capp}{\mathrm{Cap}}
\newcommand{\diam}{\operatorname{diam}}
\newcommand{\hausd}{\mathcal{H}}
\newcommand{\betaupperstar}{\beta_\infty}
\newcommand{\betalower}{\delta_\infty}
\newtheorem{theorem}{Theorem}[section]
\newtheorem{lemma}[theorem]{Lemma}
\theoremstyle{definition}
\newtheorem{example}[theorem]{Example}
\newtheorem{remark}[theorem]{Remark}
\renewcommand{\le}{\leqslant}
\renewcommand{\ge}{\geqslant}
\renewcommand{\leq}{\leqslant}
\renewcommand{\geq}{\geqslant}
\renewcommand{\Re}{\ensuremath{\operatorname{Re}}}
\renewcommand{\Im}{\ensuremath{\operatorname{Im}}}
\newtheorem*{ack}{Acknowledgement}
\begin{document}
\allowdisplaybreaks

\title[Lower Bounds of the Hausdorff dimension for Feller processes]{\bfseries Lower Bounds of the Hausdorff dimension for the images of Feller processes}

\author{V.\ Knopova \and R.L.\ Schilling \and J.\ Wang}
\thanks{\emph{V.\ Knopova:}
V.\ M.\ Glushkov Institute of Cybernetics NAS of Ukraine, 30187, Kiev, Ukraine.
 \texttt{vic\underline{ }knopova@gmx.de}}
\thanks{\emph{R.L.\ Schilling:} TU Dresden, Institut f\"{u}r Mathematische Stochastik, 01062 Dresden, Germany. \texttt{rene.schilling@tu-dresden.de}}
\thanks{\emph{J.\ Wang:}
School of Mathematics and Computer Science, Fujian Normal
University, 350007, Fuzhou, P.R. China.
\texttt{jianwang@fjnu.edu.cn}}
\date{}

\begin{abstract} Let $(X_t)_{t\ge0}$ be a Feller process generated by a pseudo-differential operator
whose symbol satisfies $\|p(\cdot,\xi)\|_\infty\le c(1+|\xi|^2)$ and $p(\cdot,0)\equiv0.$ We prove that, for a large class of examples, the Hausdorff dimension of the set $\{X_t: t\in E\}$ for any analytic set $E\subset [0,\infty)$ is almost surely bounded below by  $\betalower \Dh E$,
where
\begin{align*}
       \betalower&:=\sup\left\{\delta>0: \lim_{|\xi|\to \infty} \frac{\inf_{z\in\R^d} \Re p(z,\xi)}{|\xi|^\delta}=\infty\right\}.
\end{align*}This, along with the upper bound $ \betaupperstar \Dh E$ with \begin{align*}
   \betaupperstar
    &:=\inf\left\{\delta>0: \lim_{|\xi|\to \infty}\frac{\sup_{|\eta|\le {|\xi|}}\sup_{z\in\R^d} |p(z,\eta)|}{|\xi|^\delta}=0\right\}
    \end{align*} established in B\"{o}ttcher, Schilling and Wang (2014), extends the dimension estimates for L\'{e}vy processes of Blumenthal and Getoor (1961) and Millar (1971) to Feller processes.

\medskip\noindent
\textbf{Keywords:} Feller process, pseudo-differential operator, symbol, Hausdorff dimension, Blumenthal--Getoor index

\medskip\noindent
\textbf{MSC 2010} \emph{Primary:} 60G17. \emph{Secondary:} 60J75; 60J25; 28A78; 35S05.
\end{abstract}

\maketitle

\section{Background and Main Result}\label{section1}
A \emph{Feller process} $(X_t)_{t\ge0}$ with state space $\R^d$ is a strong
Markov process such that the associated operator semigroup $(T_t)_{t\ge0}$,
$$
    T_tu(x)=\Ee^x(u(X_t)),\qquad u\in C_\infty(\R^d),\; t\ge0,\; x\in\R^d,
$$
($C_\infty(\R^d)$ is the space of continuous functions
vanishing at infinity) enjoys the Feller property, i.e.\ it maps
$C_\infty(\R^d)$ into itself. A semigroup is said to be a
\emph{Feller semigroup}, if $(T_t)_{t\ge0}$ is a
one-parameter semigroup of linear contraction operators
$T_t:C_\infty(\R^d)\rightarrow C_\infty(\R^d)$ which is strongly
continuous: $\lim_{t\to0}\|T_tu-u\|_\infty=0$ for any $u\in C_\infty(\R^d)$, and has the sub-Markov
property: $0\le T_tu\le 1$ whenever $0\le u\le 1$. The infinitesimal
generator $(A,{D}(A))$ of the semigroup $(T_t)_{t\ge0}$ (or of the
process $(X_t)_{t\ge0}$) is given by the strong limit
$$
    Au:=\lim_{t\to0}\frac{T_tu-u}{t}
$$
on the set ${D}(A)\subset C_\infty(\R^d)$ of all $u\in C_\infty (\R^d)$ for
which the above limit exists with respect to the uniform norm.

Let $C_c^\infty(\R^d)$ be the space of smooth functions with compact
support. Under the assumption that the test functions
$C_c^\infty(\R^d)$ are contained in $D(A)$, Ph.\ Courr\`{e}ge
\cite[Theorem 3.4]{Cou} proved that the generator $A$ restricted to
$C_c^\infty(\R^d)$ is a pseudo-differential operator,
\begin{equation*}\label{symbol}
    Au(x)
    =-p(x,D)u(x)
    :=-\int e^{i \langle x,\xi\rangle}\,p(x,\xi)\,\hat{u}(\xi)\,d\xi,\quad u\in C_c^\infty(\R^d),
\end{equation*}
with \emph{symbol} $p:\R^d \times \R^d\to \C$, where $\hat{u}$ is
the Fourier transform of $u$, i.e.\ $\hat{u}(x)=(2\pi)^{{-d}}\int
e^{-i\langle x,\xi\rangle}u(\xi)\,d\xi$. The symbol $p(x,\xi)$ is locally
bounded in $(x,\xi)$, measurable as a function of $x$, and for every
fixed $x\in\R^d$ it is a continuous negative definite function in
the co-variable. This is to say that it enjoys the following
L\'{e}vy-Khintchine representation,
\begin{equation}\label{sy}
    p(x,\xi)
    =c(x)-i\langle b(x),\xi\rangle+\frac{1}{2}\langle\xi,a(x)\xi\rangle
    +\int\limits_{\mathclap{z\neq 0}}\!\!\big(1-e^{i\langle z,\xi\rangle}+i\langle z,\xi\rangle\I_{\{|z|\le1\}}\big)\,\nu(x,dz),
\end{equation}
where $(c(x),b(x),a(x),\nu(x,dz))_{x\in\R^d}$ are the L\'{e}vy
characteristics:  $c(x)$ is a nonnegative measurable function,
$b(x):=(b_j(x))\in\R^d$ is a measurable function, $a(x):=(a_{jk}(x))\in \R^{d\times d}$ is a nonnegative definite
matrix-valued function, and $\nu(x,dz)$ is a nonnegative, $\sigma$-finite kernel on
$\R^d\times\mathscr{B}(\R^d\setminus\{0\})$ such that for every
$x\in\R^d$, $\int_{z\neq 0}(1\wedge |z|^2)\,\nu(x,dz)<+\infty$. For details and a comprehensive bibliography we refer to the monographs \cite{jac-book} by N.\ Jacob and the survey \cite{BSW}. Since we will only consider the case where $c\equiv 0$, we will from now on use the L\'evy triplet $(b(x),a(x),\nu(x,dz))$.

It is instructive to have a brief look at L\'{e}vy processes which are a particular but important subclass of Feller processes. Our standard
reference for L\'{e}vy processes is the monograph by K.\ Sato
\cite{SA}. A \emph{L\'{e}vy process} $(Y_t)_{t\ge0}$ is a stochastically
continuous random process with stationary and independent
increments. The characteristic exponent (or symbol) $\psi:\R^d\to \C$ of a L\'{e}vy process is a continuous negative definite function, i.e.\ it is given by a L\'evy-Khintchine formula of the form \eqref{sy} with characteristics $(b,a,\nu(dz))$ which do not depend on $x$.

The notion of Hausdorff dimension is very useful in order to characterize the irregularity of stochastic processes.
The Hausdorff dimension of the image sets of a L\'{e}vy process has been extensively studied, see \cite{BG, P, Mu} and also the survey papers \cite{Ta, Xiao} for details. Recall that the \emph{Hausdorff dimension} of a set $A\subset \R^d$ is the unique number $\lambda$, where the $\lambda$-dimensional \emph{Hausdorff measure} $\hausd^\lambda(A)$, defined by
\begin{gather*}
   \hausd^\lambda(A)
    =\sup_{\varepsilon>0}\inf\left\{\sum_{n=1}^\infty\left(\diam A_n\right)^\lambda: A_n \text{\ Borel, \ } \bigcup_{n=1}^\infty A_n \supset A\text{\ and \ } \diam A_n\le \varepsilon\right\},
\end{gather*}
changes from $+\infty$ to a finite value.

For the study of the Hausdorff dimension for the sample paths of L\'{e}vy processes, various indices were introduced in
\cite[Sections 2, 3 and 5]{BG}:
\begin{align*}
          \beta'' &=\sup\bigg\{\delta>0: \lim_{|\xi|\to \infty} \frac{\Re \psi(\xi)}{|\xi|^\delta}=\infty\bigg\},\\
          \beta  &=\inf\bigg\{\delta>0: \lim_{|\xi|\to \infty}\frac{|\psi(\xi)|}{|\xi|^\delta}=0\bigg\}.
\end{align*}
The results for the Hausdorff dimension of the image sets of L\'{e}vy processes can be summarized as follows, see \cite{BG, P, Mu}:

\begin{remark}\label{theo1} Let $(Y_t)_{t\ge0}$ be
a $d$-dimensional L\'{e}vy process with indices $\beta''$ and $\beta$ given above. For every analytic set $E\subset [0, 1]$ we have, almost surely
$$
    \min\{d,\beta''\Dh E\}
    \le \Dh  Y(E)
    \le \min\{d, \beta \Dh E\}.
$$
\end{remark}

Now, we turn to the Hausdorff dimension for the image of a Feller processes. Throughout we will make the following assumptions on the symbol $p(x,\xi)$:
\begin{equation}\label{assumption}
    \|p(\cdot,\xi)\|_\infty\le c(1+|\xi|^2)
    \qquad\text{and}\qquad
    p(\cdot,0)\equiv0.
\end{equation}
The first condition means that the generator has only bounded `coefficients', see, e.g.\ \cite[Lemma 2.1]{S3} or \cite[Lemma 6.2]{Sch}; the second condition implies then (if the first condition is satisfied) the Feller process is conservative in the sense that the life time of the process is almost surely infinite, see \cite[Theorem
5.2]{S1}.

\medskip

We first recall the following upper bound for the Hausdorff dimension for the image of a Feller processes, which partly extends Remark \ref{theo1}.

\begin{theorem}\label{pro1}{\bf (\cite[Theorem 5.15]{BSW})}
    Let $(X_t)_{t\ge0}$ be a Feller process with the generator $(A,{D}(A))$ such that $C_c^\infty(\R^d)\subset D(A)$, i.e.\ $A|_{C_c^\infty(\R^d)}=-p(\cdot,D)$ is a pseudo-differential operator with symbol $p(x,\xi)$. Assume that the symbol satisfies \eqref{assumption}. Then, for every bounded analytic set $E\subset[0,\infty)$,
\begin{equation}\label{pro11}
     \Dh  X(E)\le \min\{d, \betaupperstar \Dh E\}
\end{equation}
    holds almost surely, where the generalized Blumenthal--Getoor indices (at infinity) are given by
\begin{align*}
    \betaupperstar
    &:=\inf\bigg\{\delta>0: \lim_{|\xi|\to \infty}\frac{\sup_{|\eta|\le {|\xi|}}\sup_{z\in\R^d} |p(z,\eta)|}{|\xi|^\delta}=0\bigg\}.
    \end{align*}
\end{theorem}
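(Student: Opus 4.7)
The plan is to reduce the Hausdorff dimension bound to a moment estimate on the diameter of the image of short time intervals, together with a uniform maximal inequality for the process. The key analytic input I would use is the tail estimate
\[
    \Pp^x\biggl(\sup_{s\le t}|X_s-x|\ge r\biggr)
    \le c\,t\,\sup_{z\in\Rd}\sup_{|\eta|\le 1/r}|p(z,\eta)|,
    \qquad x\in\Rd,\;r,t>0,
\]
which follows from the assumption $\|p(\cdot,\xi)\|_\infty\le c(1+|\xi|^2)$: a Dynkin-formula argument applied to a rescaled cut-off $\chi_r(y):=\chi(y/r)$ with $\chi\in C_c^\infty(\Rd)$ equal to $1$ near the origin works, because the expression $\|p(\cdot,D)\chi_r\|_\infty$ is controlled by $\sup_z\sup_{|\eta|\le C/r}|p(z,\eta)|$ (rescale $\eta\mapsto r\eta$ in the symbol representation of $p(\cdot,D)$). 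Writing $p^*(R):=\sup_{z\in\Rd}\sup_{|\eta|\le R}|p(z,\eta)|$, the definition of $\betaupperstar$ says that for every $\delta>\betaupperstar$ one has $p^*(R)\le R^\delta$ once $R$ is large enough.

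Next, for any time interval $I$ of length $\tau$ and any $\alpha\in(0,\delta\wedge d)$ I would establish the moment bound
\[
    \Ee\bigl[(\diam X(I))^\alpha\bigr]\le C\,\tau^{\alpha/\delta}.
\]
The strong Markov property together with the tail estimate gives $\Pp(\diam X(I)>2r)\le c\,\tau\,p^*(1/r)$; a layer-cake integration
\[
    \Ee\bigl[(\diam X(I))^\alpha\bigr]
    =\alpha\int_0^\infty r^{\alpha-1}\Pp(\diam X(I)>r)\,dr,
\]
split at the scale $r_0=(c\tau)^{1/\delta}$, produces the required $\tau^{\alpha/\delta}$ bound: the contribution from $r\ge r_0$ converges because $\alpha<\delta$, while the contribution from small $r$ is controlled by the trivial bound $p^*(R)\le c(1+R^2)$.

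For the dimension estimate, fix $\delta>\betaupperstar$ and $\lambda>\Dh E$, and assume $\delta\lambda<d$ (otherwise the trivial bound $\Dh X(E)\le d$ already suffices). Since $\lambda>\Dh E$ implies $\hausd^\lambda(E)=0$, for every $\varepsilon>0$ I can cover $E$ by countably many intervals $I_n$ of length $\tau_n$ with $\sum_n \tau_n^\lambda<\varepsilon$; because $X(E)\subset\bigcup_n X(I_n)$, the moment bound with $\alpha=\delta\lambda$ gives
\[
    \Ee\Biggl[\sum_n\bigl(\diam X(I_n)\bigr)^{\delta\lambda}\Biggr]
    \le C\sum_n\tau_n^\lambda<C\varepsilon.
\]
Consequently the $\delta\lambda$-dimensional Hausdorff content of $X(E)$ has expectation at most $C\varepsilon$; letting $\varepsilon\downarrow0$ shows this content vanishes almost surely, which forces $\Dh X(E)\le\delta\lambda$ a.s. Taking $\delta$ and $\lambda$ along countable sequences decreasing to $\betaupperstar$ and $\Dh E$ respectively, and combining with $\Dh X(E)\le d$, yields the theorem.

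The main obstacle is the uniform maximal inequality of the first display. For a L\'evy process, the corresponding bound follows at once from Fourier inversion on the characteristic function, but for a Feller process the increments are neither stationary nor independent, so one must produce an estimate uniform in the starting point; the supremum $\sup_z\sup_{|\eta|\le 1/r}|p(z,\eta)|$ is precisely what emerges from bounding the action of the pseudo-differential generator on a rescaled cut-off. Everything else — the layer-cake estimate and the covering argument — is routine once that inequality is available and the indices are chosen as above.
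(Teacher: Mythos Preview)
The paper does not prove this theorem at all: it is quoted from \cite[Theorem~5.15]{BSW}, and only the special case $E=[0,1]$ is re-derived (in Remark~\ref{remark2}(2)) via Manstavi\v{c}ius' $p$-variation criterion together with the same maximal inequality $\Pp^x(\sup_{s\le t}|X_s-x|\ge r)\le c\,t\,\sup_{z}\sup_{|\eta|\le 1/r}|p(z,\eta)|$ that you invoke. So there is no full proof in the paper to compare your proposal against.

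That said, your layer-cake step contains a genuine gap. You claim $\Ee[(\diam X(I))^{\alpha}]\le C\tau^{\alpha/\delta}$ for all $\alpha<\delta\wedge d$, but the integral $\int_0^\infty r^{\alpha-1}\Pp(\diam X(I)>r)\,dr$ need not converge at $r\to\infty$. The maximal inequality only gives $\Pp(\diam X(I)>r)\le c\tau\,p^*(1/r)$, and for large $r$ the quantity $p^*(1/r)$ is governed by the behaviour of the symbol \emph{near the origin}, which has nothing to do with $\betaupperstar$. Already for an $\alpha$-stable L\'evy process $p^*(R)=R^{\alpha}$ for every $R>0$, so $\Pp(\diam X(I)>r)\asymp \tau r^{-\alpha}$ and the moment of order $\alpha'$ diverges for every $\alpha'\ge\alpha=\betaupperstar$; yet in the covering step you need moment exponent $\delta\lambda$ with $\delta>\betaupperstar$ and $\lambda>\Dh E$, which exceeds $\betaupperstar$ whenever $\Dh E$ is close to $1$, and in general whenever the symbol decays slowly at the origin. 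Your sentence ``the contribution from small $r$ is controlled by the trivial bound $p^*(R)\le c(1+R^2)$'' does not address this range, since that bound gives no decay for $R=1/r\to 0$.

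The overall architecture (maximal inequality $+$ covering) is the right one; the standard repair, and essentially what the proof in \cite{BSW} does, is to avoid moments of the diameter altogether by first proving an almost-sure uniform growth bound of the type $\sup_{0\le s,t\le T,\,|t-s|\le h}|X_t-X_s|\le C(\omega)\,h^{1/\gamma}$ for every $\gamma>\betaupperstar$ and small $h$. Once this holds pathwise, the covering $\sum_n(\diam X(I_n))^{\gamma\lambda}\le C(\omega)^{\gamma\lambda}\sum_n\tau_n^{\lambda}$ is deterministic and no integrability is needed.
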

\begin{remark}\label{remark2} (1) The index $\betaupperstar$ coincides with the index $\beta_\infty^{\R^d}$ from \cite[Remark\,5.14b]{BSW}.
(2) The  inequality  \eqref{pro11}  with $E= [0,1]$ can  also be  deduced from the variation of sample functions for Feller processes. Recall that, for a $p\in(0,\infty)$ and a function $f$ defined on the interval $[0,T]$ and taking values in $\R^d$, its $p$-variation is given by
$$
    V_p(f, [0,T]) = \sup\bigg\{\sum_{j=0}^{m-1}|f(t_{j+1})-f(t_j)|^p: 0=t_0< t_1<\cdots<t_m=T,\,\,m\ge1\bigg\},
$$where the supremum is taken over all finite subdivisions $0=t_0< t_1<\cdots<t_m=T.$
   Let $(X_t)_{t\ge0}$ be a Feller process given in Theorem \ref{pro1}. Then, for any $p>\betaupperstar$,
    $$
        \hausd^p X([0,1])\le 2^{p} V_p(X, [0,1])<\infty
    $$
    holds almost surely, which implies that $\Dh X([0,1])\le \betaupperstar.$
\end{remark}

\medskip

The purpose of this paper is to establish the lower bound estimate for the Hausdorff dimension for the image of a Feller process. For this we need more assumptions on the Feller process. The main statement is as follows.
\begin{theorem}\label{pro2}
Let $(X_t)_{t\ge0}$ be a Feller process in $\rd$ with the transition probability density $p(t,x,y)$, which satisfies
\begin{equation}\label{diag1}
    p(t,x,y)\leq c t^{-d/\alpha},  \quad t\in (0,1], \quad x,y\in \rd,
\end{equation}
for some $\alpha\in (0,2)$.
Then,  for any analytic set $E\subset [0,1]$ we have
\begin{equation}\label{HD1}
    \Dh X(E) \geq \big( \alpha \wedge d  \big)\Dh E.
\end{equation}
\end{theorem}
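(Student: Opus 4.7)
The plan is to apply the classical Frostman energy/capacity method. For any $s<\Dh E$, Frostman's lemma (combined with the inner regularity of Hausdorff dimension on analytic sets) provides a probability measure $\mu$ supported on a compact subset of $E$ with finite $s$-energy $\mathcal{E}_s(\mu):=\iint |u-v|^{-s}\,\mu(du)\mu(dv)<\infty$. The push-forward $\mu_X:=\mu\circ X^{-1}$ is a random probability measure on $X(E)$, and if for some $\gamma>0$
\[
    \Ee\iint |X_u-X_v|^{-\gamma}\,\mu(du)\mu(dv)<\infty,
\]
then $\mu_X$ has finite $\gamma$-energy almost surely, $X(E)$ carries a probability measure of finite $\gamma$-energy a.s., and the standard capacity--dimension correspondence gives $\Dh X(E)\ge \gamma$ a.s.

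The engine of the argument is the moment bound
\[
    \Ee|X_u-X_v|^{-\gamma}\le C|u-v|^{-\gamma/\alpha},\qquad 0\le v<u\le 1,
\]
valid for every $\gamma\in(0,d)$. To prove it, I would first condition on $X_v$ via the Markov property, reducing the problem to bounding $\int_{\rd}|y-x|^{-\gamma}\,p(u-v,x,y)\,dy$ uniformly in the starting point $x$. Splitting this integral at the natural radius $r:=(u-v)^{1/\alpha}$, on $\{|y-x|\le r\}$ I insert the diagonal bound \eqref{diag1} and compute the radial integral of $|y-x|^{-\gamma}$ (which converges precisely because $\gamma<d$), obtaining a contribution of order $(u-v)^{-d/\alpha}r^{d-\gamma}$; on $\{|y-x|>r\}$ I use $|y-x|^{-\gamma}\le r^{-\gamma}$ together with $\int p(u-v,x,y)\,dy\le 1$. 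The two pieces balance exactly at $r=(u-v)^{1/\alpha}$ and together yield the stated $(u-v)^{-\gamma/\alpha}$ bound.

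Combining the two ingredients via Fubini, $\Ee\iint|X_u-X_v|^{-\gamma}\,\mu(du)\mu(dv)\le C\,\mathcal{E}_{\gamma/\alpha}(\mu)<\infty$ whenever $\gamma<d$ and $\gamma/\alpha<s$. Hence $\Dh X(E)\ge \gamma$ almost surely for every such $\gamma$; letting $\gamma\uparrow \min(d,\alpha s)$ along a countable sequence and then $s\uparrow \Dh E$ yields $\Dh X(E)\ge \min(d,\alpha\Dh E)$ a.s. A short case distinction using $\Dh E\le 1$ shows $\min(d,\alpha\Dh E)\ge (\alpha\wedge d)\Dh E$, so \eqref{HD1} follows. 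The main obstacle is precisely the uniform moment estimate: what makes the proof work is that the diagonal density bound \eqref{diag1} is \emph{uniform} in the spatial variables, so that after Markov conditioning the inner expectation is controlled by a deterministic function of $u-v$ alone. Everything else---Frostman's lemma, Fubini, and the energy--capacity--dimension chain---is standard, though one should briefly verify the joint measurability of $(u,\omega)\mapsto X_u(\omega)$ (which follows from the c\`adl\`ag version of the process) before applying Fubini to the random measure $\mu_X$.
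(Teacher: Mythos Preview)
Your proof is correct and follows essentially the same route as the paper: the key moment bound $\Ee|X_u-X_v|^{-\gamma}\le C|u-v|^{-\gamma/\alpha}$ is obtained (via the Markov property and the uniform diagonal estimate \eqref{diag1}) by the same split at radius $(u-v)^{1/\alpha}$---the paper phrases it through the layer-cake formula for $\Ee^z|X_{t-s}-z|^{-\lambda}$, which is the same computation---and then Frostman's lemma together with the energy method is used to conclude. The only organizational difference is that the paper packages the Frostman/energy step into a separate lemma built around the index $\beta'(X,x)$ and uses Jensen's inequality there, whereas you run the energy argument directly with the exponent $\gamma/\alpha$; the mathematics is the same.
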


Let us give a few examples where the conditions of Theorem \ref{pro2} are satisfied.
\begin{example}\label{exa}
\begin{enumerate}[a)]
\item
For a L\'evy process condition \eqref{diag1} is easy to check; for example, it holds true that the characteristic exponent $\psi$ satisfies
\begin{equation}\label{re-p0}
    \Re \psi(\xi)\geq c |\xi|^\alpha,\quad |\xi| > 1.
\end{equation}

\item
For a symmetric Markov process condition \eqref{diag1}
is equivalent to the following Nash type inequality
$$\|f\|_{L^2(\R^d;dx)}^{2+2\alpha/d}\le C\Big[ D(f,f)+\delta \|f\|_{L^2(\R^d;dx)}^2\Big]\|f\|_{L^1(\R^d;dx)}^{2\alpha/d},\quad f\in C_c^\infty(\R^d),$$ for some positive constants $C$ and $\delta$,  where $D(f,f)=-\langle f,Af\rangle_{L^2(\R^d;dx)}$. See \cite[Vol.\ II, Section~3.6]{jac-book}, also the original paper \cite{CKS87} and  \cite{BM07,SW12} for more recent developments. We refer to \cite[Proposition II.1]{TC} for a proof (relying on functional inequalities) that general non-symmetric semigroups satisfy \eqref{diag1}.

\item
Sufficient conditions when a L\'{e}vy type  process satisfies \eqref{diag1} are given in \cite{KK13a, KK13b}, where the  approach relies on the parametrix construction of a Markov process.

Consider the triplet
\begin{equation}\label{lh}
    (b(x), 0, m(x,z)\,\mu(dz))
\end{equation}
where the functions $b(\cdot)$ and $m(\cdot,z)$ are bounded and H\"older continuous with $m(x,z)\geq c>0$ for all $x$, $z\in\R^d$; $\mu$ is a  L\'evy measure: $\int_\rd (1\wedge|z|^2)\, \mu(dz)<\infty$, moreover  it satisfies the following condition: There exists $\beta>1$ such that
\begin{equation}\label{ul}
\sup_{\ell \in \mathds{S}^d} q^U(r \ell) \leq \beta \inf_{\ell \in \mathds{S}^d} q^L(r  \ell), \quad r\geq 1,
\end{equation}
where $\mathds{S}^d$ is the unit sphere in $\rd$, and
  $$
  q^U(\xi):= \int_\rd (1\wedge\langle\xi, z \rangle^2 ) \mu(dz), \quad q^L(\xi) := \int_{|\langle\xi, z \rangle|\leq 1} \langle\xi, z \rangle^2 \mu(dz).
  $$
    Note that the  L\'evy--Khintchine exponent
$$
    q(\xi)= \int_{\R^d} (1-\cos \langle\xi, z \rangle) \mu(dz),
$$
always satisfies the inequalities $ (1-\cos 1) q^L(\xi)\leq q(\xi) \leq 2 q^U(\xi)$; moreover, we have for large $|\xi|$ the relations
$$
    q(\xi) \asymp q^L(\xi)\asymp q^U(\xi),\quad |\xi|>1,
$$
i.e.\ the exponent does not oscillate ``too much''. In particular, condition \eqref{ul} on the  L\'evy measure $\mu(dz)$ together with the boundedness of $m(x,z)$ implies that
\begin{equation}\label{re-p}
\inf_{x\in \rd} \Re p(x,\xi)\geq c |\xi|^\alpha, \quad |\xi| > 1,
\end{equation}
holds for  $\alpha= 2/\beta \in (0,2)$ and $c>0$, see \cite{K13, KK13b}.     It was proved in \cite{KK13b} that starting with the L\'evy  characteristics as in \eqref{lh}   there exists a Feller process  $(X_t)_{t\ge0}$  associated with symbol $p(x,\xi)$ given by \eqref{sy} and, moreover, this process $(X_t)_{t\ge0}$ possesses a transition probability density $p(t,x,y)$. 
Similar conditions (for a slightly different L\'evy triplet) are given in \cite{KK13a}.

\item
Condition \eqref{diag1} holds true for the transition probability density of the stable-like process  associated  with the L\'{e}vy triplet $(0,0, |z|^{-1-\alpha(x)} \,d|z|\,\tilde{\mu}(x,d\ell))$, where $\ell:= z/|z|$ for $z\in \rd$; here, the index function $\alpha(x)$  and the kernel $\tilde{\mu}(x,d\ell)$  are bounded and continuous such that
$$
C_1\leq \int_{\mathds{S}^d} |(v,\ell)|^{\alpha(x)} \tilde{\mu}(x,d\ell) \leq C_2,\quad v\in \mathds{S}^d, x\in \R^d,
$$
holds with some positive constants $C_1$ and $C_2$, see  \cite[Theorem~5.1]{Ko00}.  In this case, $\alpha$ in \eqref{diag1} is equal to  $\min_{x\in\R^d} \alpha(x)$.
\end{enumerate}
\end{example}

\begin{remark}
Note that in the cases a), c) and d)  (cf. Example~\ref{exa})  the characteristic exponent and the symbol satisfy, respectively, \eqref{re-p0} and \eqref{re-p} where, for d),  $\alpha= \min_{x\in\R^d} \alpha(x)$. This allows to state the lower bound in terms of the Blumenthal--Getoor index:
$$
\Dh X(E)\geq \big(\betalower \wedge d\big) \Dh E,
$$
 where
$$
 \betalower
   :=\sup\bigg\{\delta>0: \lim_{|\xi|\to \infty} \frac{\inf_{z\in\R^d} \Re p(z,\xi)}{|\xi|^\delta}=\infty\bigg\}
$$
is the generalized Blumenthal--Getoor index at infinity.   This index coincides with the index $\delta_\infty^{\R^d}$ from \cite[Remark\, 5.14b]{BSW}.
\end{remark}

\section{Proofs}
 We begin with the
\begin{proof}[Proof of Remark~$\ref{remark2}$ $(2)$]
According to \cite[Theorem 1.3]{M1} and \cite[Theorem 3]{M2}, we know that \emph{if there exist two positive constants $r_0$ and $C$ such that for all $t>0$ and $0<r<r_0$,
$$
    a(t,r):\,=\sup_{0<s\le t,\,x\in\R^d}\Pp^x\big(|X_s-x|\ge r\big)\le Ct^\beta r^{-\alpha}
$$
with two constants $\alpha>0$ and $\beta>(3-e)/(e-1)$, then for any $p>\alpha/\beta$, the $p$-variation of the sample function $(X_t)_{t\ge0}$ is finite almost surely.}
It is clear that for any $t$, $r>0$,
$$
    a(t,r)\le \sup_{x\in \R^d} \Pp^x\big(\sup_{s\le t}|X_s-x|\ge r\big).
$$
Applying \cite[Corollary\,5.2]{BSW} yields that
$$
    a(t,r)
    \le c\,t\, \sup_{x\in\R^d} \sup_{|\xi|\le 1/r}|p(x,\xi)|
$$
holds for some constant $c>0$. By the very definition of $\betaupperstar$, for any $p>\betaupperstar$, there exists an $r_{0}$ small enough such that for any
$0<r\le r_{0}$,
$$
    \sup_{x\in\R^d} \sup_{|\xi|\le 1/r}|p(x,\xi)|\le r^{-p},
$$
which proves the finiteness of the $p$-variation because of Manstavi\v{c}ius' results mentioned earlier.
The remaining part of the proof immediately follows from \cite[Theorem 8.4]{BG}.
\end{proof}
The proof of  Theorem~\ref{pro2} relies on several results,  which for the reader's convenience  we quote below. First, define the $\lambda$-capacity of a Borel set $B\subset \rd$ as follows:
\begin{equation}
    \capp_\lambda (B):= \left(\inf\Big\{ \int_B\int_B |x-y|^{-\lambda}\varpi(dy)\varpi(dx)\::\: \varpi\in\mathcal M_1^+(B)\Big\}\right)^{-1} \label{cap}
\end{equation}
where $\mathcal M_1^+(B)$ denotes the probability measures with support in $B$.
We need the following version of Frostman's lemma, see \cite[Theorem 4.13]{Fa03} or the original paper   \cite{Fr35}.
\begin{lemma}\label{Fr} If $F\subset \rd$ is a closed set with strictly positive Hausdorff measure $\mathcal{H}^\lambda(F)>0$ for some $\lambda>0$, then $\capp_{\lambda'}(F)>0$ for all $\lambda'<\lambda$.
\end{lemma}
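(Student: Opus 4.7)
The plan is to construct a non-trivial Borel measure supported on $F$ whose mass on every ball decays at least as fast as $r^{\lambda}$, and then show that such a measure has finite $\lambda'$-energy whenever $\lambda'<\lambda$. This directly yields $\capp_{\lambda'}(F)>0$ through the variational definition \eqref{cap}.

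\emph{Step 1 (reduction to a compact set).} Since the $\lambda$-dimensional Hausdorff measure is a Borel measure on $\rd$ which is countably subadditive, the hypothesis $\hausd^{\lambda}(F)>0$ forces $\hausd^{\lambda}\bigl(F\cap \overline{B(0,R)}\bigr)>0$ for some sufficiently large $R>0$. Since $F$ is closed, the set $F_{0}:=F\cap \overline{B(0,R)}$ is compact, and replacing $F$ by $F_{0}$ only strengthens the conclusion. So I would assume from now on that $F$ is compact and sits inside a dyadic cube $Q_{0}$.

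\emph{Step 2 (mass distribution principle).} This is the real content of the lemma. I would construct, for each large integer $m$, a probability-type measure $\mu_{m}$ supported on $F$ (in fact on a $2^{-m}$-neighbourhood) with the two-sided control
\[
 \mu_{m}(Q)\le (\diam Q)^{\lambda}\quad\text{for every dyadic cube }Q\text{ of side }\ge 2^{-m},
\qquad
 \mu_{m}(Q_{0})\ge c\,\hausd^{\lambda}_{2^{-m}}(F),
\]
where $\hausd^{\lambda}_{\delta}$ is the usual $\delta$-restricted Hausdorff pre-measure. The construction is the standard bottom-up dyadic scheme: start at level $m$ by assigning to each dyadic cube $Q$ of side $2^{-m}$ that meets $F$ the provisional mass $(\diam Q)^{\lambda}$, and zero otherwise; then, moving upward, for each parent cube $Q$ of side $2^{-k}$ replace the sum of the masses in its children by $\min\{(\diam Q)^{\lambda},\sum_{Q'}\mu(Q')\}$, rescaling the children proportionally when the $\min$ is attained by the first term. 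A standard computation then shows that a Vitali-type covering of $F$ by dyadic cubes whose total $\lambda$-size is near $\hausd^{\lambda}_{2^{-m}}(F)$ gives the lower bound above, while the construction guarantees the upper bound on every dyadic cube, and hence, after accounting for the covering of an arbitrary ball by boundedly many dyadic cubes of comparable side, $\mu_{m}(B(x,r))\le C\,r^{\lambda}$ for all $x\in\rd$ and $r>0$, with $C$ depending only on $d$ and $\lambda$. Since the measures $\mu_{m}$ all have uniformly bounded total mass and are supported on the fixed compact set $F_{0}$, I pass to a weak-$\ast$ limit $\mu$ along a subsequence. The measure $\mu$ is supported on $F$ (by closedness), satisfies $\mu(F)\ge c\,\hausd^{\lambda}(F)>0$, and retains the ball bound $\mu(B(x,r))\le C\,r^{\lambda}$.

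\emph{Step 3 (finite energy).} Fix $\lambda'<\lambda$. For each $x\in\rd$ the layer-cake formula, together with the growth estimate and $R:=\diam F<\infty$, gives
\[
 \int |x-y|^{-\lambda'}\,\mu(dy)
 =\lambda'\int_{0}^{\infty}s^{-\lambda'-1}\mu\bigl(B(x,s)\bigr)\,ds
 \le \lambda' C\int_{0}^{R}s^{\lambda-\lambda'-1}\,ds+\mu(F)\,R^{-\lambda'}.
\]
The first integral converges because $\lambda-\lambda'>0$, and the bound is independent of $x$. Integrating once more against $\mu$ shows that the $\lambda'$-energy $\iint|x-y|^{-\lambda'}\,\mu(dx)\,\mu(dy)$ is finite. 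After normalising $\mu$ to a probability measure $\varpi\in\mathcal M_{1}^{+}(F)$, definition \eqref{cap} yields $\capp_{\lambda'}(F)\ge \bigl(I_{\lambda'}(\varpi)\bigr)^{-1}>0$, as claimed.

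The main obstacle is Step~2: the recursive dyadic construction and the need to verify the ball-growth estimate from the cube-growth estimate with a constant that does not blow up when passing to the weak-$\ast$ limit. Steps~1 and~3 are purely bookkeeping once Step~2 is available.
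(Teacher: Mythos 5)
Your proposal is correct and is precisely the classical mass-distribution-plus-energy argument for Frostman's lemma; the paper itself gives no proof, simply citing \cite[Theorem 4.13]{Fa03} and \cite{Fr35}, where exactly this dyadic construction and layer-cake energy estimate are carried out. The only point worth double-checking in a full write-up is the passage of the ball-growth bound $\mu_m(B(x,r))\le C r^{\lambda}$ to the weak-$*$ limit (use open balls and the portmanteau theorem, then let the radius decrease), which you have correctly identified as the delicate step.
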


The lemma below is taken from \cite[Lemma~2.2]{BG}, see also \cite{Mc}.
\begin{lemma} \label{rene-lem2}
Let $f: \mathcal{X}\to \rd$ be a measurable function on a metric space $(\mathcal{X},d(\cdot,\cdot))$, and $E\subset \mathcal{X}$ be a Borel set.  If there exists a probability measure $\varpi\in \mathcal M_1^+(E)$ such that
\begin{equation}
\int_E\int_E |f(x)-f(y)|^{-\lambda}\varpi(dx)\varpi(dy)<\infty\label{mu1}
\end{equation}
for some $\lambda>0$, then  $\mathcal{H}^\lambda(f(E))>0$.
\end{lemma}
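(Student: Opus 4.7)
The plan is to push the measure $\varpi$ forward to $\R^d$ through $f$ and then apply the standard energy method (i.e.\ the mass distribution principle) to the resulting image measure. Define $\mu := f_*\varpi$ by $\mu(B) := \varpi(f^{-1}(B))$ for every Borel set $B\subset\R^d$; measurability of $f$ ensures this is a legitimate Borel probability measure, and clearly $\mu$ is concentrated on $f(E)$. The change-of-variables formula gives
$$
I_\lambda(\mu) := \int_{\R^d}\!\!\int_{\R^d} |u-v|^{-\lambda}\,\mu(du)\,\mu(dv) = \int_E\!\!\int_E |f(x)-f(y)|^{-\lambda}\,\varpi(dx)\,\varpi(dy) < \infty,
$$
so $\mu$ has finite $\lambda$-energy.

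Next I would promote this finite-energy condition to a uniform upper density bound on a set of positive $\mu$-mass. Since $\int U\,d\mu = I_\lambda(\mu)<\infty$ where $U(u):=\int |u-v|^{-\lambda}\,\mu(dv)$, the potential $U$ is finite $\mu$-a.e., so we may choose $M>0$ with $\mu(A_M)\ge 1/2$ for $A_M := \{u\in\R^d : U(u)\le M\}$. For any $u\in A_M$ and any $r>0$, using $|u-v|<r$ on $B(u,r)$,
$$
\mu(B(u,r)) = \int_{B(u,r)} 1\,\mu(dv) \le r^\lambda \int_{B(u,r)} |u-v|^{-\lambda}\,\mu(dv) \le M\,r^\lambda.
$$

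Finally I would invoke the mass distribution principle. Given any countable cover $\{B_n\}_{n\ge 1}$ of $A_M$ with $\diam B_n \le \delta$, discard those $B_n$ that miss $A_M$ and pick $u_n\in B_n\cap A_M$ for the others; then $B_n\subset \overline{B(u_n,\diam B_n)}$, so $\mu(B_n)\le M(\diam B_n)^\lambda$. Summing,
$$
\tfrac12 \le \mu(A_M) \le \sum_n \mu(B_n) \le M\sum_n (\diam B_n)^\lambda,
$$
which shows $\hausd^\lambda_\delta(A_M)\ge 1/(2M)$ for every $\delta>0$ and hence $\hausd^\lambda(f(E)) \ge \hausd^\lambda(A_M) \ge 1/(2M) >0$.

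There is no serious obstacle here; this is a textbook application of the energy method. The only routine checks are that $f^{-1}(B)$ is $\varpi$-measurable for Borel $B\subset\R^d$ (immediate from the measurability hypothesis on $f$) and that the possibly non-Borel nature of the image $f(E)$ is irrelevant, since Hausdorff measure is defined on arbitrary subsets of $\R^d$.
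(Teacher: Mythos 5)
Your overall strategy --- push $\varpi$ forward to $\mu:=f_*\varpi$ and run the energy/mass-distribution argument on $\rd$ --- is sound, and the computation of $I_\lambda(\mu)$, the Chebyshev choice of $A_M$, and the density bound $\mu(B(u,r))\le M r^\lambda$ for $u\in A_M$ are all correct. The gap is in the very last line: from $\hausd^\lambda_\delta(A_M)\ge 1/(2M)$ you conclude $\hausd^\lambda(f(E))\ge\hausd^\lambda(A_M)$, but this monotonicity step requires $A_M\subset f(E)$, which is false in general. The set $A_M=\{u\in\rd: U(u)\le M\}$ is a sublevel set of the potential on all of $\rd$ and typically contains many points outside $f(E)$; for instance, if $\lambda<d$ and $\mu$ happens to have a bounded density, then $U$ is bounded everywhere, $A_M=\rd$ for $M$ large, and $\hausd^\lambda(A_M)=\infty$ tells you nothing about $f(E)$. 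What you need is a lower bound on the $\hausd^\lambda$-measure of a \emph{subset of the image}, not of the sublevel set of the potential.

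The repair is routine: run the covering argument over covers of $f(E)$ (equivalently of $A_M\cap f(E)$) instead of covers of $A_M$. If $\{B_n\}$ is a Borel cover of $f(E)$, then $f^{-1}\big(\bigcup_n B_n\big)\supset E$ gives $\mu\big(\bigcup_n B_n\big)=1$; discarding those $B_n$ that miss $A_M$ removes at most $\mu(A_M^c)\le 1/2$ of the mass, and each remaining $B_n$ meets $A_M$ and hence satisfies $\mu(B_n)\le M(\diam B_n)^\lambda$, so that $1/2\le M\sum_n(\diam B_n)^\lambda$. It is worth noting that the paper's own proof sidesteps the issue by staying on $\mathcal{X}$: Chebyshev's inequality applied to \eqref{mu1} produces a set $F\subset E$ with $\varpi(F)>0$ on which $x\mapsto\int_E|f(x)-f(y)|^{-\lambda}\,\varpi(dy)$ is bounded by some $M$, and then for any cover $\{\Xi_j\}$ of $f(F)$ one estimates $\varpi(F\cap f^{-1}(\Xi_j))\le M(\diam\Xi_j)^\lambda$ directly; since $f(F)\subset f(E)$, the final monotonicity step is legitimate there. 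Either version works once the covers are taken of (a subset of) the image set itself.
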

\medskip

Let $(Y_t)_{t\ge0}$ be a Markov process in $\rd$, and
\begin{equation}
\beta'(Y,x):= \sup\left\{\lambda\geq 0:\,\, \Ee^x\big( |Y_t-Y_s|^{-\lambda}\big) =O (|t-s|^{-1}) \quad \text{as $t-s\to 0$}\right\}, \label{bet}
\end{equation}
This index was introduced in \cite{Sch94}.
\begin{lemma}\label{rene-lem1}
Let $(Y_t)_{t\ge0}$ be a Markov process with values in $\rd$, and $E\subset [0,1]$ be an  analytic set with Hausdorff dimension $\Dh E$. Then,
\begin{equation}
\Dh Y(E) \geq \beta'(Y,x)\Dh  E \quad \Pp^x\text{-a.s.}\label{lem-dim}
\end{equation}
\end{lemma}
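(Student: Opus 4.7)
The plan is a classical potential-theoretic argument combining Frostman's lemma, the defining bound of $\beta'$, and the energy criterion in Lemma~\ref{rene-lem2}. I fix parameters $\lambda < \beta'(Y,x)$ and $\gamma < \Dh E$; the goal is to show $\Dh Y(E) \ge \lambda\gamma$ holds $\Pp^x$-almost surely, after which countable sequences $\lambda_n \uparrow \beta'(Y,x)$ and $\gamma_n \uparrow \Dh E$ yield the announced bound. Since $E \subset [0,1]$ is analytic, Choquet's capacitability theorem produces a compact $F \subset E$ with $\hausd^\gamma(F) > 0$, and Lemma~\ref{Fr} then delivers, for every $\gamma' < \gamma$, a probability measure $\varpi$ supported on $F$ satisfying
$$I_{\gamma'}(\varpi) := \int_F\!\int_F |t-s|^{-\gamma'}\,\varpi(dt)\,\varpi(ds) < \infty.$$

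The heart of the argument transfers this energy estimate to the image $Y(F)$. By definition of $\beta'(Y,x)$, there are constants $C,\delta_0>0$ with $\Ee^x|Y_t-Y_s|^{-\lambda} \le C|t-s|^{-1}$ for $|t-s|<\delta_0$. Because $E\subset[0,1]$ forces $\gamma' \le 1$, Jensen's inequality applied to the concave map $u\mapsto u^{\gamma'}$ gives
$$\Ee^x|Y_t - Y_s|^{-\lambda\gamma'} \le \bigl(\Ee^x|Y_t - Y_s|^{-\lambda}\bigr)^{\gamma'} \le C^{\gamma'}|t-s|^{-\gamma'},$$
and Tonelli yields $\Ee^x \int\!\int |Y_t-Y_s|^{-\lambda\gamma'}\,\varpi(dt)\,\varpi(ds) \le C^{\gamma'} I_{\gamma'}(\varpi) < \infty$. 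Hence the double integral is $\Pp^x$-a.s.\ finite, and applying Lemma~\ref{rene-lem2} pathwise to the sample function $t\mapsto Y_t(\omega)$ gives $\hausd^{\lambda\gamma'}(Y(F))>0$ $\Pp^x$-a.s.; since $Y(F)\subset Y(E)$, this forces $\Dh Y(E)\ge \lambda\gamma'$ almost surely. Sending $\gamma'\uparrow \Dh E$ and then $\lambda\uparrow \beta'(Y,x)$ along countable sequences completes the argument.

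I expect the main obstacle to be technical bookkeeping rather than any conceptual hurdle. The estimate from $\beta'(Y,x)$ is only asymptotic as $|t-s|\to 0$, whereas the double integral runs over all of $F\times F$; this is handled by covering $[0,1]$ by finitely many intervals of diameter below $\delta_0/2$ and restricting $F$ to one that inherits the full Hausdorff dimension $\Dh E$, so that the pointwise bound holds everywhere on the support of $\varpi\otimes\varpi$. Progressive measurability of $(Y_t)_{t\ge 0}$ makes the sample paths measurable on $F$, as required by Lemma~\ref{rene-lem2}, and the countable-supremum step preserves the almost-sure statement through the limits in $\lambda$ and $\gamma'$.
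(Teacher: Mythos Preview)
Your proposal is correct and takes essentially the same approach as the paper: locate a closed subset $F\subset E$ of positive Hausdorff measure (the paper cites Davies' theorem, you invoke Choquet capacitability), apply Frostman's lemma to obtain a probability measure of finite $\gamma'$-energy, combine Jensen's inequality for the concave map $u\mapsto u^{\gamma'}$ with the defining $\beta'$-bound and Tonelli to make the pathwise energy integral $\Pp^x$-a.s.\ finite, then invoke Lemma~\ref{rene-lem2} and pass to the limits. Your extra care about the asymptotic range of the $O(|t-s|^{-1})$ estimate (localizing $F$ to an interval of small diameter) is a technical detail the paper simply glosses over by writing the bound for all $|t-s|\le 1$.
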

\begin{proof}
Let  $0<\lambda<\beta'(Y,x)$ and $0<\alpha<\alpha'<\Dh E$.  We find, using Jensen's inequality for concave functions, that there exists a constant $C>0$ such that
\begin{equation}\label{rene1}
\Ee^x |Y_t-Y_s|^{-\lambda\alpha} \leq  \left(\Ee^x |Y_t-Y_s|^{-\lambda}\right)^\alpha \leq C |t-s|^{-\alpha}, \quad |t-s|\leq 1.
\end{equation}
Since $\alpha'<\Dh E$, we have $\mathcal{H}^{\alpha'}(E)=\infty$, where $\mathcal{H}^{\alpha'}(E)$ is the Hausdorff measure  of $E$ with dimension $\alpha'$. We will use the following result, proved in \cite[p.~489, Corollary]{Da}: \emph{Let  $A\subset \rd$ be an analytic set with $\mathcal{H}^\lambda(A)=\infty$ for some $\lambda>0$. Then for any $r>0$  there exists a closed subset $F_r\subset A$ such that $\mathcal{H}^\lambda (F_r)\geq r$.} By this, there exists a closed set $F\subset E$ such that $\mathcal{H}^{\alpha'}(F)>0$, which implies by Lemma~\ref{Fr} that $\capp_\alpha(F)>0$  for all $\alpha<\alpha'$.
By the definition of the capacity, there exists a probability measure with support on $F$ such that
$$
\int_F \int_F |t-s|^{-\alpha} \mu(dt)\mu(ds)<\infty.
$$
Thus, the above inequality, together with \eqref{rene1} and the Fubini theorem, gives us
$$
\Ee^x \left( \int_F \int_F |Y_t-Y_s|^{-\alpha \lambda} \mu(dt)\mu(ds)\right) <\infty,
$$
which in turn yields
$$
\int_F \int_F |Y_t-Y_s|^{-\alpha\lambda} \mu(dt)\mu(ds)<\infty \quad \Pp^x\text{-a.s.}
$$
According to Lemma~\ref{rene-lem2},  we get
$$
\Lambda^{\alpha\lambda} (Y(E,\omega))\geq \Lambda^{\alpha \lambda} (Y(F,\omega))>0 \quad \Pp^x\text{-a.s.}
$$
Therefore, we derive the statement of the lemma by passing to the limit as $\alpha\uparrow \Dh E$ and $\lambda \uparrow \beta'(Y,x)$.
\end{proof}

Now, we are in a position to present the proof of Theorem \ref{pro2}.

\begin{proof}[Proof of Theorem $\ref{pro2}$]
By the strong Markov property, we have for any $\lambda<d\wedge\alpha$, $x\in \rd$ and $0<s<t$ with $t-s\leq 1$
\begin{align*}
    \Ee^x |X_t-X_s|^{-\lambda}
    &=\Ee^x \left[ \Ee^{X_s} |X_{t-s}-X_0|^{-\lambda}\right]
    \le \sup_{z\in\R^d}\Ee^{z} |X_{t-s}-z|^{-\lambda}\\
    &=\sup_{z\in\R^d} \Ee^{z} \left[\int_0^\infty\I_{\{|X_{t-s}-z|^{-\lambda}>u\}}\,du\right]\\
    &\le \sup_{z\in\R^d} \Ee^{z} \left[\int_{(t-s)^{-\lambda/\alpha}}^\infty\I_{\{|X_{t-s}-z|^{-\lambda}>u\}}\,du\right]+ (t-s)^{-\lambda/\alpha}\\
    &= \lambda \sup_{z\in\R^d} \Ee^{z} \left[\int_0^{(t-s)^{1/\alpha}} u^{-\lambda-1}\I_{\{|X_{t-s}-z|<u\}}\,du\right] + (t-s)^{-\lambda/\alpha}\\
    &\le \lambda \int_0^{(t-s)^{1/\alpha}} u^{-\lambda-1} \sup_{z\in\R^d} \Pp^z (|X_{t-s}-z|\le u) \,du + (t-s)^{-\lambda/\alpha}.
\end{align*}
According to \eqref{diag1}, there is a constant $C>0$ such that for all $0<u\leq (t-s)^{1/\alpha}$,
$$
    \sup_{z\in\R^d} \Pp^z (|X_{t-s}-z|\le u)\le C (t-s)^{-d/\alpha}u^d.
$$
This, along with the inequality above and the fact that $\lambda<d\wedge\alpha$, yields that
$$
    \Ee^x |X_t-X_s|^{-\lambda}
    \le \left(\frac{C\lambda}{d-\lambda}+1\right)(t-s)^{-\lambda/\alpha}
    \le \left(\frac{C\lambda}{d-\lambda}+1\right)(t-s)^{-1} .
$$
Thus, we have the lower estimate for the index $\beta'(X,x)$ defined in \eqref{bet}:
$$
    \beta'(X,x)\geq  \lambda \quad \Pp^x\text{-a.s.}
$$
Letting $\lambda\to d\wedge\alpha$ in the inequality above, we prove \eqref{HD1} by Lemma \ref{rene-lem1}.
\end{proof}

\begin{remark}
    The proof of Theorem~\ref{pro2} shows that we can replace the condition \eqref{diag1} by the following assumptions
    \begin{gather}\label{diag2}
        \exists c\ge 1,\;
        \forall z\in\rd,\; 0<u,t\le 1\::\:
        \Pp^z\left(|X_{t}-z|\leq u\right)
        \leq c\Pp^z\left(\sup_{0<r\leq t}|X_r-z|\leq cu\right),\\
        \label{diag3}\exists\kappa>0,\;\forall x,\xi\in\rd\::\: |\Im p(x,\xi)| \leq \kappa \Re p(x,\xi).\quad\text{(sector condition)}
    \end{gather}
    If \eqref{diag2} and \eqref{diag3} are satisfied, we can replace $\alpha$ in the proof by the index $\delta^*_\infty$ defined by 
    \begin{align*}
      \delta^*_\infty&:=\sup\left\{\delta>0: \lim_{|\xi|\to \infty} \frac{\sup_{|\eta|\le |\xi|}\inf_{z\in\R^d} \Re p(z,\eta)}{|\xi|^\delta}=\infty\right\},
\end{align*}
  and we find using the second maximal estimate for Feller processes, cf.\ \cite[Corollary 5.6, p.\ 116]{BSW}, for $\lambda < \lambda' < d\wedge\delta^*_\infty$ and $0<s<t$ with $t-s<1$
    \begin{align*}
        \Pp^z\left(\sup_{0<r\leq t-s}|X_r-z|\leq cu\right)
        \leq c'\left((t-s) \sup_{|\xi|\leq 1/(2\kappa cu)}\inf_{y\in\rd} \Re p(y,\xi)\right)^{-1}
        \leq \frac{c'' u^{\lambda'}}{(t-s)}.
    \end{align*}
    Thus we get that for all $x\in\R^d$ and $0<s<t$ with $t-s<1$
    $$
        \Ee^x |X_t-X_s|^{-\lambda} \leq \left(\frac{cc''\lambda}{\lambda'-\lambda}+1\right) (t-s)^{-1},
    $$
    and the rest of the argument is as before.
\end{remark}
\begin{ack}We thank the referee for
careful reading of this paper.
Financial support through the Scholarship  for Young Scientists
2012-2014, Ukraine (for Victoria Knopova), DFG (grant Schi 419/8-1)
(for Ren\'{e} L.\ Schilling), and the National Natural Science
Foundation of China (No.\ 11201073) and the
Program for Nonlinear Analysis and Its Applications (No.\ IRTL1206)
(for Jian Wang) are gratefully acknowledged.
\end{ack}

\end{document}